\documentclass[11pt]{article}
\usepackage{amsmath,amssymb,amsthm,amsfonts}
\usepackage{mathtools}
\usepackage{bm}
\usepackage{cleveref}
\usepackage{natbib}
\usepackage[letterpaper, margin=1in]{geometry}

\bibliographystyle{apalike}

\newcommand{\F}{{\mathbb{F}}}
\newcommand{\Z}{{\mathbb{Z}}}
\newcommand{\E}{{\mathbb{E}}}

\DeclareMathOperator{\rank}{rank}

\DeclarePairedDelimiter\floor{\lfloor}{\rfloor}

\newtheorem{theorem}{Theorem}

\theoremstyle{definition}
\newtheorem{definition}{Definition}

\begin{document}
\title{A nearly tight upper bound on tri-colored sum-free sets in characteristic 2}
\author{Robert Kleinberg}
\maketitle

\begin{abstract}
A tri-colored sum-free set in an abelian group $H$ is a collection of ordered triples in $H^3$,
$\{(a_i,b_i,c_i)\}_{i=1}^m$, such that the equation $a_i+b_j+c_k=0$ holds if and only if
$i=j=k$. Using a variant of the lemma introduced by Croot, Lev, and Pach in their breakthrough
work on arithmetic-progression-free sets,
we prove that the 
size of any tri-colored sum-free set in $\F_2^n$ is bounded above by $6 \binom{n}{\floor*{n/3}}$.
This upper bound is tight, up to a factor subexponential in $n$: there exist tri-colored sum-free sets
in $\F_2^n$ of size greater than $\binom{n}{\floor*{n/3}} \cdot 2^{-\sqrt{16 n / 3}}$ 
for all sufficiently large $n$.
\end{abstract}

\section{Introduction} \label{sec:intro}
In a breakthrough paper, \citet*{croot:2016} applied the polynomial method to prove that for sufficiently 
large $n$, every set of more than $(3.62)^n$ elements of $(\Z/4\Z)^n$ contains a three-term
arithmetic progression. This was the first such bound of the form $c^n$ for a constant $c<4$. Soon
afterward, \citet{ellenberg:2016} and, independently, \citet{gijswijt:2016} extended the argument
to prove an upper bound of the form $c(p)^n$ on the size of any subset of $\F_p^n$ that is free of 
three-term arithmetic progressions, where $p$ is any odd prime and $c(p)$ is a constant strictly less
than $p$. \citeauthor{gijswijt:2016} provides the explicit bound $c(p) < e^{-1/18} p$.

In all of the aforementioned results, the upper bound obtained using the new methods is of the
form $C^n$ and the best known lower bound on the size of arithmetic-progression-free sets is of
the form $c^n$ for some $c < C$. Thus, in all known cases, there is still an exponential gap between
the best known upper and lower bounds for such sets.
In this note, we present a variant of the problem of finding
large sets that contain no three-term arithmetic progressions,
and we prove upper and lower bounds that differ by a sub-exponential
factor --- i.e., an upper bound of the form $c^{n+o(n)}$ and a lower
bound of the form $c^{n-o(n)}$, with the same constant $c$ appearing
as the base of the exponent in both bounds --- when the problem is restricted 
to the group $\F_2^n$. The upper bound proof is an application of the lemma
of \citet{croot:2016}, while the lower bound follows from a construction
due to \citet{fu:2014}, which in turn utilizes a construction from
\citet{coppersmith:1990}.

Since vector spaces over a field of characteristic 2 have no three-term arithmetic progressions,
it is not immediately clear how to generalize these questions to the case of characteristic 2.
The following generalization was proposed and analyzed
by \citet{blasiak:2016}.

\begin{definition} \label{def:tcsfs}
A tri-colored sum-free set in an abelian group $H$ is a collection $\{(a_i,b_i,c_i)\}_{i=1}^m$ of ordered triples in $H^3$
such that the equation $a_i+b_j+c_k=0$ holds if and only if
$i=j=k$. 
\end{definition}
Note that if $H$ is an abelian group of odd order and 
$A = \{a_1,\ldots,a_m\} \subseteq H$,  then $A$ contains no three-term arithmetic progressions
if and only if the set $\{(a_i,a_i,-2 a_i)\}$ is a tri-colored sum-free set. Thus, upper bounds on
the size of tri-colored sum-free sets immediately yield upper bounds on sets with no three-term
arithmetic progressions, but the definition of tri-colored sum-free sets is meaningful even when
$H = \F_2^n$.

\section{Upper Bound} \label{sec:ub}

To prove an upper bound on the size of tri-colored sum-free sets in $\F_p^n$, we will introduce
another closely related definition.

\begin{definition} \label{def:pms}
A {\em perfectly matched sequence} in an abelian group $H$ is a sequence
of ordered pairs $\{(a_i,b_i)\}_{i=1}^m$ in $H^2$ such that the equation 
$a_i+b_i=a_j+b_k$ has no solutions with $j \neq k$. The set $T = \{ a_i+b_i \mid
i=1,\ldots,m\}$ is called the {\em target set} of the perfectly matched sequence.
\end{definition}

Note that if $\{(a_i,b_i,c_i)\}$ is a tri-colored sum-free sequence of size $m$,
then $\{(a_i,b_i)\}$ is a perfectly matched sequence whose target set $T = \{-c_i\}$
has $m$ elements. The following theorem therefore yields an upper bound on
the size of tri-colored sum-free sequences.

\begin{theorem} \label{thm:pms}
Let $L_n$ denote the linear subspace of $\F_p[x_1,\ldots,x_n]$ 
spanned by monomials of the form
$\prod_{i=1}^n x_i^{\alpha_i}$, where $0 \leq \alpha_i < p$ for all $i$,
and let $L_{n,d}$ denote the subspace of $L_n$ spanned by monomials
of degree at most $d$. 
The target set of any perfectly matched sequence in $\F_p^n$
has at most $3 \dim L_{n,d}$ elements, where $d = \floor*{\frac13 (p-1) n}$.
\end{theorem}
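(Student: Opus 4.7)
The plan is to recast the perfectly matched sequence as a diagonal trilinear tensor and invoke the slice-rank technique of Croot--Lev--Pach and Tao. First I would restrict to a subsequence in which the target values $t_i := a_i+b_i$ are pairwise distinct, so that the new length equals $N := |T|$ and the perfectly matched condition is inherited (any subsequence of a perfectly matched sequence is perfectly matched). Setting $c_k := -t_k$, I claim the $N \times N \times N$ array $F_{ijk} := [\,a_i+b_j+c_k=0\,]$ equals the Kronecker delta $\delta_{ijk}$: if $a_i+b_j = t_k = a_k+b_k$, the template ``$a_I + b_I = a_J + b_K \Rightarrow J=K$'' applied with $(I,J,K)=(k,i,j)$ forces $i=j$, after which distinctness of the $t_\ell$ forces $k=i$.

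Next I would write $F$ as a polynomial in the $3n$ coordinates via the identity $1-z^{p-1} = [z=0]$ on $\F_p$:
$$F_{ijk} \;=\; \prod_{\ell=1}^{n}\Bigl(1 - (a_{i,\ell}+b_{j,\ell}+c_{k,\ell})^{p-1}\Bigr).$$
Each of the $n$ factors expands as either the constant $1$ or a homogeneous piece of degree $p-1$, so every monomial in the expansion has the form $a_i^{\alpha}\,b_j^{\beta}\,c_k^{\gamma} := \prod_\ell a_{i,\ell}^{\alpha_\ell}b_{j,\ell}^{\beta_\ell}c_{k,\ell}^{\gamma_\ell}$ with each exponent in $\{0,\dots,p-1\}$ and $|\alpha|+|\beta|+|\gamma|\le n(p-1)$. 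Hence at least one of the three partial degrees is at most $d=\floor*{n(p-1)/3}$, and each monomial can be assigned to the variable block ($a$, $b$, or $c$) realizing this minimum.

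Grouping by assignment yields
$$F_{ijk} \;=\; \sum_{|\alpha|\le d} a_i^{\alpha}\,G_\alpha(b_j,c_k) \;+\; \sum_{|\beta|\le d} b_j^{\beta}\,H_\beta(a_i,c_k) \;+\; \sum_{|\gamma|\le d} c_k^{\gamma}\,K_\gamma(a_i,b_j),$$
a slice-rank decomposition of $F$ with at most $3\dim L_{n,d}$ slices (the indices range over exponent vectors in $\{0,\dots,p-1\}^n$ of weight at most $d$, which are exactly the monomials spanning $L_{n,d}$). Since the slice rank of the $N\times N\times N$ identity tensor is known to equal $N$ (Tao's lemma), this yields $N\le 3\dim L_{n,d}$.

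The main obstacle is the slice-rank lower bound for the diagonal tensor, which is the essential algebraic ingredient and which I would cite from Tao's formulation of the Croot--Lev--Pach argument; the rest is a routine degree count, and the factor $3$ in the bound is precisely the number of buckets in the pigeonhole step that splits monomials by their minimum-degree variable block.
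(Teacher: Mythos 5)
Your proof is correct, but it takes a genuinely different route from the paper's. You use Tao's slice-rank reformulation of the Croot--Lev--Pach argument: after symmetrizing the perfectly matched sequence into the diagonal tensor $F_{ijk}=[a_i+b_j+c_k=0]$ (the verification via $(I,J,K)=(k,i,j)$ plus distinctness of targets is exactly right), you expand $F$ as $\prod_\ell\bigl(1-(a_{i,\ell}+b_{j,\ell}+c_{k,\ell})^{p-1}\bigr)$, pigeonhole each monomial into the block of smallest degree, and invoke the lemma that the diagonal tensor has full slice rank. The paper instead follows Gijswijt's original presentation: it constructs a polynomial $f\in L_{n,(p-1)n-d-1}$ vanishing off $T$, passes to the bivariate $g(x,y)=f(x+y)$, bounds the rank of its coefficient matrix $C$ by $2\dim L_{n,d}$ via the same degree pigeonhole, and bounds below by the rank of the near-diagonal matrix $M_{ij}=f(a_i+b_j)$; the third copy of $\dim L_{n,d}$ enters separately through the codimension count $\dim V\geq|T|-\dim L_{n,d}$. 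The two proofs are essentially isomorphic --- both rest on the same degree-$\leq d$ pigeonhole and a full-rank diagonal object --- but your slice-rank packaging is more symmetric in the three colors and obtains the factor $3$ in a single step, whereas the paper's version reaches it as $2+1$, splitting between the matrix-rank bound and the subspace-dimension bound. The paper's route stays closer to the CLP/Ellenberg--Gijswijt lemma as originally stated; yours matches how the result is usually presented for the tri-colored problem in the subsequent literature (e.g.\ Blasiak et al.). Both are complete and give the identical bound $|T|\leq 3\dim L_{n,d}$.
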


\begin{proof}
The proof is a recapitulation of the proof of \citet{gijswijt:2016}, Theorem 2,
which corresponds to the special case when $a_i=b_i$ for all $i$. We reiterate 
the proof here to facilitate 
the task of verifying that \citeauthor{gijswijt:2016}'s proof extends to the
general case.

Let $V$ denote the vector space of polynomials $f \in L_{n,(p-1)n - d - 1}$
such that $f(x)=0$ for all $x \not\in T$. The dimension of 
$L=L_{n,(p-1)n-d-1}$ is equal to $p^n - \dim L_{n,d}$, and $V$ is obtained
from $L$ by imposing an additional $p^n-|T|$ linear constraints, one
for each $x \not\in T$. Hence $\dim V \geq |T| - \dim L_{n,d}$.

The evaluation map $V \to \F_p^T$ is injective --- see \citep{gijswijt:2016}, 
Proposition 1 --- hence there is a set $S \subseteq T$ of cardinality $|S| = \dim V$
such that the evaluation map $V \to \F_p^S$ is bijective. Choose a polynomial
$f \in V$ such that $f(x)=1$ for all $x \in S$, and consider the $(2n)$-variate
polynomial
\[
  g(x_1,\ldots,x_n,y_1,\ldots,y_n) = f(x+y).
\]
For a pair of multi-indices ${\alpha}, {\beta} \in \{0,\ldots,p-1\}^n$,
let $C_{{\alpha},{\beta}}$ denote the coefficient of the monomial
${x}^{{\alpha}} {y}^{{\beta}}$ in $g$. Our choice of
$d = \floor*{\frac13 (p-1) n}$ ensures that $(p-1)n - d - 1 \leq 2d+1$,
so $f \in L_{n,2d+1}$ and, consequently, for every monomial 
${x}^{{\alpha}} {y}^{{\beta}}$ occurring in $g$ either 
${x}^{{\alpha}}$ or ${y}^{{\beta}}$ has degree at most $d$.
Hence, the non-zero entries of $C$ belong to the union of a set of rows and
a set of columns each indexed by a set of $\dim L_{n,d}$ monomials. 
Accordingly, $\rank C \leq 2 \dim L_{n,d}$. On the other hand, the rank
of $C$ is bounded below by the rank of the matrix $M_{i,j} = f(a_i+b_j)$;
see \citet{gijswijt:2016}, Lemma 2.
By construction, $M_{i,j}=0$ when $i \neq j$ and $M_{i,j}=1$ when 
$i=j$ and $a_i+b_i \in S$. Hence,
\[
  |S| \leq \rank M \leq \rank C \leq 2 \dim L_{n,d}.
\]
Recalling that $|S| = \dim V \geq |T| - \dim L_{n,d}$, we obtain
the inequality $|T| \leq 3 \dim L_{n,d}$ as claimed.
\end{proof}

When $p=2$, we have $\dim L_{n,d} = \sum_{k=0}^{\floor*{n/3}} \binom{n}{k} <
2 \binom{n}{\floor*{n/3}}$. This bound, in conjunction with Theorem~\ref{thm:pms}, implies
the upper bound on tri-colored sum-free sets in $\F_2^n$ stated in the abstract.

\section{Lower Bound} \label{sec:lb}

Our lower bound on the size of tri-colored sum-free sets $\F_2^n$
recapitulates a construction due to \citet{fu:2014}
which, in turn, is based on a method
originating in the work of \citet{coppersmith:1990} on fast matrix multiplication.
We shall make use of the fact that the cyclic group $\Z / M\Z$, for large $M$,
has subsets of size $M^{1-o(1)}$ which contain no three-term arithmetic progressions.
The best known lower bound on the size of such subsets is the following
theorem of \citet{elkin:2011}; see also \citet{green:2010}. (In the theorem
statement, the expression $\log(\cdot)$ denotes the base-2 logarithm.)

\begin{theorem}[\citealp{elkin:2011}] \label{thm:elkin}
For all sufficiently large $M$, the group $\Z / M \Z$ has a subset of size
greater than $\log^{1/4}(M) \cdot 2^{-\sqrt{8 \log M}} \cdot M$ which
contains no three distinct elements in arithmetic progression.  
\end{theorem}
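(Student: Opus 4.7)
The plan is to prove Elkin's theorem by a Behrend-style embedding into $\Z/M\Z$, enhanced with Elkin's annulus trick to gain the extra $\log^{1/4}(M)$ factor over the classical Behrend bound.

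First, I would fix parameters $n$ and $d$ with $M \geq (2n)^d$, and consider the base-$(2n)$ encoding $\phi : \{0,1,\ldots,n-1\}^d \to \Z/M\Z$ given by $\phi(\mathbf{x}) = \sum_{i=1}^d x_i (2n)^{i-1}$. Since each coordinate of $\mathbf{x}+\mathbf{z}$ is strictly less than $2n$, no digit carries occur, so a relation $\phi(\mathbf{x}) + \phi(\mathbf{z}) = 2\phi(\mathbf{y})$ in $\Z/M\Z$ forces $\mathbf{x}+\mathbf{z} = 2\mathbf{y}$ in $\Z^d$. It therefore suffices to produce a large 3-AP-free subset of the cube $\{0,\ldots,n-1\}^d \subset \Z^d$ and push it through $\phi$.

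The Behrend-style idea is to restrict to a sphere $\|\mathbf{x}\|_2^2 = r^2$: strict convexity of the squared norm rules out nontrivial 3-APs, and pigeonholing over the $O(n^2 d)$ possible integer values of $\|\mathbf{x}\|^2$ yields an AP-free density of order $1/\sqrt{\log M}$ inside the cube and the classical bound $M \cdot 2^{-\sqrt{8\log M}}$. The main obstacle, and the content of Elkin's improvement, is to thicken the sphere to an annulus $A = \{\mathbf{x} : r^2 \leq \|\mathbf{x}\|^2 < r^2 + \rho\}$ without introducing 3-APs. By the parallelogram identity, any 3-AP $(\mathbf{x},\mathbf{y},\mathbf{z})$ with all three squared norms in $[r^2, r^2+\rho)$ must satisfy $\|\mathbf{x}-\mathbf{z}\|^2 \leq 4\rho$, so the triple lies in a ball of radius $O(\sqrt{\rho})$. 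I would therefore cover $A$ by translates of a small cube $Q$ of side $\approx \sqrt{\rho/d}$; by an averaging argument some translate captures an $\Omega(\mathrm{vol}(Q))$-fraction of the points of $A$, and within that translate the small diameter makes it cheap to delete any surviving APs while losing only a constant factor.

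Finally, I would optimize parameters: taking $d \sim \sqrt{\tfrac{1}{2}\log M}$ and $n \sim M^{1/d}/2$ reproduces the Behrend exponent $2^{-\sqrt{8\log M}}$, and choosing $\rho$ polynomial in $d$ extracts the extra $(\log M)^{1/4}$ factor. The hardest step is the annulus argument: pinning down exactly how much one can thicken the sphere before spurious 3-APs proliferate is precisely what determines the polynomial improvement, and it is the only place the proof goes beyond Behrend's original construction.
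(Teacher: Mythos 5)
The paper does not prove Theorem~\ref{thm:elkin}: it is quoted as a known result of \citet{elkin:2011} (with \citet{green:2010} cited as a simplified account) and then used as a black box in the lower-bound construction of Section~\ref{sec:lb}. So there is no in-paper proof for your sketch to be compared against; what you have written is a reconstruction of Elkin's external argument.

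As such a reconstruction, the skeleton is right --- the carry-free base-$(2n)$ embedding $\phi$, the Behrend sphere restriction via strict convexity, and the parallelogram-law observation that if $\mathbf{x}+\mathbf{z}=2\mathbf{y}$ with all three squared norms in a window of width $\rho$ then $\|\mathbf{x}-\mathbf{z}\|^2 < 4\rho$ --- but the cleanup step is exactly where the sketch breaks down, and you acknowledge as much. Covering the annulus $A$ by small cubes of side $\approx\sqrt{\rho/d}$ and passing to the densest translate $Q_0$ does not control the surviving progressions: inside a cube of diameter $\approx\sqrt{\rho}$ the constraint $\|\mathbf{x}-\mathbf{z}\|<2\sqrt{\rho}$ is automatic for every pair, so it carries no information, and there is no reason the number of $3$-APs in $A\cap Q_0$ should be a constant fraction of $|A\cap Q_0|$. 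What Elkin (and more transparently Green and Wolf) actually do is count the progressions across the entire annulus: the parallelogram bound limits the differences $\mathbf{x}-\mathbf{z}$ that can occur, one shows the total number of such progressions is $o(|A|)$ for a suitable choice of $\rho$, and one then deletes a point from each. The $(\log M)^{1/4}$ gain over Behrend comes from optimizing the annulus width in that global count, not from a local covering. (Also, a small slip: ``captures an $\Omega(\mathrm{vol}(Q))$-fraction'' is dimensionally inconsistent; you mean a fraction $\Omega\bigl(\mathrm{vol}(Q)/\mathrm{vol}(\text{ambient cube})\bigr)$.)
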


Assume for simplicity that $n$ is divisible by 3. (When $n$ is indivisible by 3,
we may take a large tri-colored sum-free set in $\F_2^{n'}$ for $n' = 3 \floor*{n/3}$
and ``pad'' each vector with 0's to obtain an equally large tri-colored sum-free set in 
$\F_2^n$.) Let $M$ be an odd integer greater than $4 \binom{2n/3}{n/3}$. 
Our tri-colored sum-free set will be constructed as a subset of the set $X$ 
of all triples $(a,b,c) \in \left( \{0,1\}^n \right)^3$ such that the vectors $a,b,c$
have Hamming weights $\frac{n}{3}, \frac{n}{3}, \frac{2n}{3}$, respectively, 
and $c=a+b$. Note that for any $(a,b,c) \in X$,
the equation $c = a+b$ holds regardless of whether the left and right sides 
are interpreted as vectors over $\F_2$ or over $\Z$. 

Letting $W = (\Z / M\Z)^{n+1}$ we now define three functions $h_0, h_1, h_2 :
\{0,1\}^n \times W \to \Z / M \Z$ as follows.
\begin{align*}
  h_0(a,w) = \sum_{s=1}^n a_s w_s, \qquad
  h_1(b,w) = \frac12 \left( w_0 + \sum_{s=1}^n b_s w_s \right), \qquad
  h_2(c,w) = w_0 + \sum_{s=1}^n c_s w_s.
\end{align*}
The function $h_1$ is well-defined because $\Z / M \Z$ is a cyclic group of odd order.
By construction, whenever $a,b,c$ are three vectors satisfying $a+b=c$
(over $\Z$), the values $h_0(a,w), h_1(b,w), h_2(c,w)$ are either identical or they form
an arithmetic progression in $\Z / M \Z$. Now, fix a set $B \subset \Z / M \Z$ that 
contains no three distinct elements in arithmetic progression.
For any $w \in W$ define sets $Y(w), Y_0(w), Y_1(w), Y_2(w), Y_3(w), Z(w)$ as follows.
\begin{align*}
  Y(w) & = \{ (a,b,c) \in X \mid h_0(a,w), h_1(b,w), h_2(c,w) \in B\} \\
  Y_0(w) &= \{ (a,b,c) \in Y(w) \mid \exists (b',c') \neq (b,c) \mbox{ s.t. } (a,b',c') \in Y(w) \} \\
  Y_1(w) &= \{ (a,b,c) \in Y(w) \mid \exists (a',c') \neq (a,c) \mbox{ s.t. } (a',b,c') \in Y(w) \} \\
  Y_2(w) &= \{ (a,b,c) \in Y(w) \mid \exists (a',b') \neq (a,b) \mbox{ s.t. } (a',b',c) \in Y(w) \} \\
  Z(w) &= Y(w) \setminus \left( Y_0(w) \cup Y_1(w) \cup Y_2(w) \right).
\end{align*}
We first claim that $Z(w)$ is a tri-colored sum-free set. The equation $a+b+c=0$ holds
in $\F_2^n$ for every $(a,b,c) \in Z(w)$, by construction, so we need only verify conversely that
for any three (not
necessarily distinct) elements 
$(a,b,c), (a',b',c'), (a'',b'',c'')$ of $Z(w)$, if the equation 
$a + b' + c'' = 0$ holds in $\F_2^n$ then all three of the given elements of $Z(w)$ 
are equal to one another. Indeed, our hypotheses about
$(a,b,c), (a',b',c'), (a'',b'',c'')$ imply all of the following conclusions
about $(a, b', c'')$:
\begin{enumerate}
\item $a$ and $b'$ have Hamming weight $n/3$, while $c''$ has Hamming weight $2n/3$;
\item $c'' = a  + b'$;
\item $h_0(a,w), h_1(b',w), h_2(c'',w) \in B$.
\end{enumerate}
In other words, $(a,b',c'')$ belongs to $Y(w)$. The fact that $(a,b,c) \not\in Y_0(w)$ now implies
that $(a,b,c) = (a,b',c'')$. Similarly, the facts that $(a',b',c') \not\in Y_1(w)$ and 
$(a'',b'',c'') \not\in Y_2(w)$ imply that $(a',b',c') = (a'',b'',c'') = (a,b',c'')$. Thus,
the three given elements of $Z(w)$ are all equal to one another, as required by the 
definition of a tri-colored sum-free set.

Let us now prove a lower bound on the expected cardinality of $Z(w)$ when $w$ is chosen
uniformly at random from $(\Z / M\Z)^{n+1}$. For a given element $(a,b,c) \in X$, the 
values $h_0(a,w), h_1(b,w), h_2(c,w)$
must either be equal to one another or they must form an arithmetic progression.
The set $B$ contains no three elements in arithmetic progression, so
the event that $h_0(a,w), h_1(b,w), h_2(c,w) \in B$ coincides with the
event that there exists $\beta \in B$ such that 
$h_0(a,w) = h_1(b,w) = h_2(c,w) = \beta$; furthermore, if any two of
 $h_0(a,w), h_1(b,w), h_2(c,w)$ are equal to $\beta$, then so is the third. 
For $w$ uniformly distributed in $(\Z / M\Z)^{n+1}$, the
values $h_0(a,w)$ and $h_2(c,w)$ are independent and uniformly
distributed in $\Z / M\Z$, so the probability of the event
$h_0(a,w)=h_2(c,w)=\beta$ is $M^{-2}$. Summing over 
all $\beta \in B$ and all $(a,b,c) \in X$, we find that the expected 
cardinality of $Y(w)$ is
\begin{equation} \label{lb.1}
  \E |Y(w)| = |X| \cdot |B| \cdot M^{-2} = \binom{n}{n/3} \cdot \binom{2n/3}{n/3} \cdot |B| \cdot M^{-2}.
\end{equation}
Similar reasoning allows us to derive an upper bound the expected 
cardinality of $Y_0(w)$. If $(a,b,c)$ belongs to $Y_0(w)$ it means that there
is some other element $(a,b',c') \in X$ and some $\beta \in B$ such that 
\begin{equation}  \label{lb.2}
  h_0(a,w) = h_1(b,w) = h_2(c,w) = h_1(b',w) = h_2(c',w) = \beta.
\end{equation} 
For $w$ uniformly distributed in $(\Z / M\Z)^{n+1}$, the
values $h_0(a,w), h_2(c,w),$ and $h_2(c',w)$ are independent and uniformly
distributed in $\Z / M\Z$; this is most easily verified by checking that
$h_0(a,w), h_2(c,w),$ and $h_2(c,w) - h_2(c',w)$ are independent and
uniformly distributed. Furthermore, if $h_0(a,w)=h_2(c,w)=h_2(c',w)=\beta$
then $h_1(b,w)=h_1(b',w)=\beta$, so the probability of the event indicated
in~\eqref{lb.1} is $|M|^{-3}$. Summing over all pairs of distinct elements
$(a,b,c),\, (a,b',c') \in X$ that share the same first coordinate, 
and all $\beta \in B$, we find that the expected cardinality of $Y_0(w)$ is at most 
\begin{equation} \label{lb.2}
  \E |Y_0(w)| \leq |X| \cdot \left( \binom{2n/3}{n/3} -1 \right) \cdot |B| \cdot M^{-3}
                     = \E |Y(w)| \cdot \frac{1}{M}  \left( \binom{2n/3}{n/3} -1 \right)
                     < \tfrac14 \E | Y(w) |
\end{equation}
where the last inequality is justified by our choice of $M > 4 \binom{2n/3}{n/3}$.
Analogous reasoning yields the bounds $\E |Y_1(w)|, \E |Y_2(w)| < \frac14 \E | Y(w) |$,
and hence
\[
  \E |Z(w)| \geq
  \E |Y(w)| - \E |Y_0(w)| - \E |Y_1(w)| - \E |Y_2(w)| > \tfrac14 \E |Y(w)| = 
  \frac14 \cdot \frac1M \binom{2n/3}{n/3} \cdot \frac{|B|}{M} \cdot \binom{n}{n/3}.
\]
If $n$ is sufficiently large, then for $M = 4 \binom{2n/3}{n/3} + 1$ and $B > \log^{1/4}(M) \cdot 2^{-\sqrt{8 \log M}} \cdot M$
we have
\[
  \frac14 \cdot \frac1M \binom{2n/3}{n/3} \cdot \frac{|B|}{M} > 2^{-\sqrt{16 n / 3}},
\]
hence
\[
  \E |Z(w)| > \binom{n}{n/3} \cdot 2^{-\sqrt{16 n / 3}} > \binom{n}{n/3}^{1-o(1)}
\]
as claimed.
%
%
%
%

\bibliographystyle{plainnat}
\bibliography{pmf-ub}

\end{document}